\documentclass[final,1p]
{elsarticle}

\usepackage{amsmath, amsthm, amssymb}
\usepackage{amsfonts}
\usepackage{amsmath}
\usepackage{amssymb}
\usepackage{array}
\usepackage{stmaryrd}
\usepackage{graphicx}
\usepackage{color}
\usepackage{tikz}

\usepackage{enumitem}

\newtheorem{theorem}{Theorem}[section]
\newtheorem{lemma}[theorem]{Lemma}

\newtheorem{corollary}[theorem]{Corollary}

\newtheorem{remark}[theorem]{\it Remark}
\newtheorem{theorema}{Theorem}

\numberwithin{equation}{section}




\newcommand{\pt}{\partial}
\newcommand{\D}{\partial}

\newcommand {\beq} {\begin{equation}}
\newcommand {\eeq} {\end{equation}}
\renewcommand{\sim}{\simeq}

\newcommand {\U} {{\mathcal U}}

\newcommand{\LL}{{\mathcal L}}

\newcommand{\R}{\mathbb{R}}

\definecolor{blue}{rgb}{0,0,0.9}
\definecolor{pass}{rgb}{0,0,0.6}

%



\begin{document}

\begin{frontmatter}



\title{
Error analysis of an L2-type method on graded meshes for semilinear subdiffusion equations}


\author{Natalia Kopteva\corref{cor1}}
\cortext[cor1]{Department of Mathematics and Statistics, University of Limerick, Limerick, Ireland}
\ead{natalia.kopteva@ul.ie}


\begin{abstract}
A semilinear initial-boundary value problem with a Caputo time derivative of fractional order $\alpha\in(0,1)$ is considered,
 solutions of which typically exhibit a singular behaviour at an initial time.
 For an L2-type discretization of order $3-\alpha$, we give sharp pointwise-in-time error bounds
on graded temporal meshes with arbitrary degree of grading.
\end{abstract}







\end{frontmatter}

\section{Introduction. Main result}
Our purpose 
is to extend
sharp pointwise-in-time error bounds obtained in \cite{NK_L2}
for L2-type discretizations of linear subdiffusion equations
to the semilinear 
case:
\beq\label{problem}
\begin{array}{l}
\D_t^{\alpha}u+\LL u+f(x,t,u)=0\quad\mbox{for}\;\;(x,t)\in\Omega\times(0,T],\\[0.2cm]
u(x,t)=0\quad\mbox{for}\;\;(x,t)\in\pt\Omega\times(0,T],\qquad
u(x,0)=u_0(x)\quad\mbox{for}\;\;x\in\Omega.
\end{array}
\eeq
This problem is posed in a bounded Lipschitz domain  $\Omega\subset\R^d$, $d\in\{1,2,3\}$.
The operator $\D_t^\alpha$, for some $\alpha\in(0,1)$, is
the Caputo fractional derivative in time defined \cite{Diet10} $\forall\,t>0$ by\vspace{-0.1cm}
$$
\D_t^{\alpha} u(\cdot,t) :=  \frac1{\Gamma(1-\alpha)} \int_{0}^t(t-s)^{-\alpha}\, \pt_s u(\cdot, s)\, ds,
$$
where $\Gamma(\cdot)$ is the Gamma function, and $\pt_s$ denotes the partial derivative in $s$.
The spatial operator $\LL$ in \eqref{problem} is a linear self-ajoint second-order elliptic operator defined by
$\LL u := \sum_{k=1}^d \bigl\{-\pt_{x_k}\!(a_k(x)\,\pt_{x_k}\!u)  \bigr\}$,
with sufficiently smooth coefficients $\{a_k\}$ 
in $C(\bar\Omega)$, 
which also satisfy $a_k>0$
in $\bar\Omega$.

Throughout the paper, we make the following assumption on $f$.\vspace{-0.1cm}

\begin{itemize}
\item[{\bf A1.}]
Let $f$ be continuous in $s$ and  satisfy $f(\cdot,t,s)\in L_\infty(\Omega)$ for all $t>0$ and $s\in\R$,
and the Lipschitz condition with some constant $\lambda \ge 0$:
$$
|f(x,t,s_1)-f(x,t,s_2)|\le \lambda|s_1-s_2|\quad \forall s_1, s_2\in \R,\;\; x\in\Omega,\;\;t>0.
$$
\end{itemize}\vspace{-0.3cm}

\begin{remark}[A1 within the exact solution range]
While, strictly speaking, Allen-Cahn-type equations (with the cubic $f=u^3-u$)
and Fisher-type equations (with the quadratic $f=u^2-u$), do not satisfy A1, our error analysis still applies, as described in \cite[{\S8.1}]{NK_semil_L1}.
\end{remark}

We shall focus on  the semidiscretization of our problem~\eqref{problem} in time of type
\beq\label{semediscr_method}
\delta_t^\alpha U^m +\LL U^m+ f(\cdot,t_m, U^m)=0\;\;\mbox{in}\;\Omega,\quad U^m=0\;\;\mbox{on}\;\pt\Omega\quad\forall\,m=1,\ldots,M,
\eeq
subject to $U^0=u_0$,
associated with the temporal mesh $0=t_0<t_1<\ldots <t_M=T$. 
(An extension to the fully discrete case can be carried out in lines
with \cite[{\S5.2}]{NK_L2}.)

The discrete fractional-derivative operator $\delta_t^\alpha$ in \eqref{semediscr_method} is
an L2-type
discrete fractional-derivative operator for $\pt_t^{\alpha}$ 
\cite{higher_order,NK_L2,quan_wu_L2_sinum23},
 based on piecewise-quadratic Lagrange interpolants:
\begin{subequations}\label{delta_t_def}
\beq
\delta_{t}^{\alpha} U^m :=\D^\alpha_t (\Pi^mU)(t_m),\quad
\quad
\Pi^m:=\left\{\begin{array}{cll}
\Pi_{1,1}&\mbox{on~}(0,t_1)&\mbox{for~}m=1,\\
\Pi_{2,j}&\mbox{on~}(t_{j-1},t_j)&\mbox{for~}1\le j<m,\\
\Pi_{2,j-1}&\mbox{on~}(t_{j-1},t_j)\;&\mbox{for~}j=m>1,\\
\end{array}\right.
\eeq
where
 $\Pi_{1,j}$ and $\Pi_{2,j}$ are the standard linear and quadratic Lagrange interpolation operators with the following
 interpolation points:
\beq
\Pi_{1,j}\;:\;\{t_{j-1},t_j\},\qquad\qquad
\Pi_{2,j}\;:\;\{t_{j-1},t_j,t_{j+1}\}.
\eeq
\end{subequations}
The above method \eqref{semediscr_method},\,\eqref{delta_t_def} is of order $3-\alpha$ in time on appropriate temporal meshes (by comparison, while the Alikhanov discretization of $\pt_t^\alpha$ also enjoys the order $3-\alpha$, in the parabolic case it deteriorates to order $2$; see, e.g., \cite[Theorem~4.4 v Theorem~4.10]{NK_XM}).


There is a substantial literature on the L1 \cite{stynes_etal_sinum17,sinum18_liao_et_al,sinum19_liao_et_al,NK_MC_L1,NK_XM} and Alikhanov \cite{ChenMS_JSC,NK_XM,JCP_2021_liao_et_al}  schemes in the context of graded/nonuniform temporal meshes,
the analyses relying on that the discrete time-fractional operators are associated with Z-matrices (i.e. matrices with non-positive off-diagonal values); the semilinear case has also been addressed
\cite{Ji_Liao_L1,NK_semil_L1,Ji_Liao_Al}. 

Far fewer results are available for L2-type discretizations, such as \eqref{delta_t_def}, the difficulty in the analysis, to a large degree, due to that the operator $\delta_t^\alpha$ is not associated with a Z-matrix even on uniform meshes \cite{higher_order,NK_L2,quan_wu_L2_sinum23}
(for similar L2-type schemes, see also \cite{jcp_gao,jcp_xing_yan}).
 In \cite{higher_order} this operator
is analysed  on uniform temporal meshes, and the optimal convergence order
 $3-\alpha$ in time is established
under unrealistically strong regularity assumptions on the exact solution.
The case of nonuniform temporal meshes was addressed in \cite{NK_L2}  building on the framework, developed in \cite{NK_MC_L1,NK_XM}, based on monotonicity arguments and discrete barrier functions.  An alternative  approach was recently presented in \cite{quan_wu_L2_sinum23}.

Importantly, in extending the framework of \cite{NK_MC_L1,NK_XM} to the linear version of \eqref{semediscr_method},\,\eqref{delta_t_def} in \cite{NK_L2},  the key was the following
inverse-monotone representation for $\delta_t^\alpha$:\vspace{-0.1cm}
\begin{subequations}\label{UV_ab}
\beq\label{UV}
\delta_t^\alpha U^m=\sum_{j=0}^m \kappa_{m,j}V^j\;\;\forall\,m\ge 1,
\quad V^j:=\frac{U^j-\beta_j U^{j-1}}{1-\beta_j}\;\;\forall\,j\ge 1,\quad
V^0:=U^0,\vspace{-0.1cm}
\eeq
where $\beta_j\in [0,1)$.
Then, under certain conditions on the mesh,
 a set $\{\beta_j\}$ was chosen such that\vspace{-0.3cm}
\beq\label{UV_kappa}
\kappa_{m,m}>0\;\;\mbox{and}\;\;\sum_{j=0}^{m}\kappa_{m,j}=0\;\;\forall\,m\ge 1,\quad \kappa_{m,j}\le 0\;\;\forall\,0\le j<m\le M.\vspace{-0.3cm}
\eeq
\end{subequations}%
By \eqref{UV_ab}, the matrix associated with $\delta_t^\alpha$ is immediately inverse-monotone, i.e. all elements of the inverse to
this matrix are non-negative (as it is a product of two M-matrices, which are inverse-monotone). The latter property is equivalent to a discrete comparison principle (as in \cite[Theorem~3.1]{NK_L2} or, equivalently,
Lemma~\ref{lem_comp} for $\lambda=0$), which plays a key role in the analysis of \cite{NK_L2}.

{\it Novelty.} In this paper, an extension of the sharp pointwise-in-time error bounds of \cite{NK_L2} from the linear to the semilinear case generally follows the approach of \cite{NK_semil_L1}, but it
hinges on another nontrivial result, the inverse-monotonicity of the discrete operator $\delta_t^\alpha-\lambda$, where $\lambda>0$ is from assumption A1 on $f$
(see Lemma~\ref{lem_comp}). While elementary, our proof of this result is novel, and, importantly, it immediately applies to any inverse-monotone discretization of the Caputo derivative $\pt_t^\alpha$.
Another rather surprising feature of the L2 operator $\delta_t^\alpha$ of \eqref{delta_t_def} is
worth noting.
The inverse-monotonicity of $\delta_t^\alpha+\lambda$ (for $\lambda>0$) remains unclear,
while one might expect this to immediately follow from the inverse monotonicity of $\delta_t^\alpha$ (this feature is in sharp contrast to
the Caputo derivative operator $\pt_t^\alpha$ or any Z-matrix discretization of the latter, such as the L1 or Alikhanov operators).
\medskip

{\it Notation.}
We write
 $a\sim b$ when $a \lesssim b$ and $a \gtrsim b$, and
$a \lesssim b$ when $a \le Cb$ with a generic constant $C$ depending on $\Omega$, $T$, $u_0$,
$f$, and $\alpha$,
but not
 on the total number of degrees of freedom.
 $H^1_0(\Omega)$ is the standard
  space of functions in the Sobolev space $W^1_2(\Omega)$ vanishing on $\pt\Omega$.
Additionally, the following main result of the paper and further assumptions below involve
$\tau_j:=t_j-t_{j-1}$ and
$\rho_j:=\frac{\tau_{j}}{\tau_{j-1}}$.

\begin{theorem}\label{theo_semi}
Let the graded temporal mesh
$\{t_j=T(j/M)^r\}_{j=0}^M$,
with any fixed $r\ge1$, satisfy
$\lambda\tau_j^{\alpha}< \{\Gamma(2-\alpha)\}^{-1}$ $\forall\,j\ge1$
Suppose that $u$ is a unique solution of \eqref{problem} with the initial data $u_0\in L_\infty(\Omega)$ and
 under assumption {\rm A1} on $f$.
 Also, for $t\in(0,T]$
suppose that
$u(\cdot,t)\in H^1_0(\Omega)$
and
$\|\partial_t^l u (\cdot, t)\|_{L_2(\Omega)}\lesssim 1+t^{\alpha-l}$, $l = 1,3$.
Then
there exists a unique solution   $\{U^m\in H^1_0(\Omega)\cap L_{\infty}(\Omega)\}_{m=1}^M$ of \eqref{semediscr_method}, for which one has
\beq\label{L2_semi_error}
\|u(\cdot,t_m)-U^m\|_{L_2(\Omega)}\lesssim {\mathcal E}^m:=
\left\{\begin{array}{ll}
M^{-r}\,t_m^{\alpha-1}&\mbox{if~}1\le r<3-\alpha,\\[0.2cm]
M^{\alpha-3}\,t_m^{\alpha-1}[1+\ln(t_m/t_1)]&\mbox{if~}r=3-\alpha.\\[0.2cm]
M^{\alpha-3}\,t_m^{\alpha-(3-\alpha)/r}
&\mbox{if~}r>3-\alpha.
\end{array}\right.
\eeq
\end{theorem}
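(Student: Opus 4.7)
\medskip\noindent\emph{Proof plan.}
The plan is to mirror the linear argument of \cite{NK_L2}, with the Lipschitz semilinearity absorbed by the newly established inverse-monotonicity of $\delta_t^\alpha-\lambda$ (Lemma~\ref{lem_comp}), following the strategy of the L1 semilinear analysis in \cite{NK_semil_L1}.

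Existence and uniqueness of $U^m\in H^1_0(\Omega)\cap L_\infty(\Omega)$ will be settled by induction on $m$. Using the representation \eqref{UV_ab}, equation \eqref{semediscr_method} at level $m$ reads $(\lambda^\star+\LL)U^m+f(\cdot,t_m,U^m)=g^{m-1}$ with $\lambda^\star:=\kappa_{m,m}/(1-\beta_m)\sim \tau_m^{-\alpha}/\Gamma(2-\alpha)$ and known $g^{m-1}$; the mesh restriction $\lambda\tau_m^\alpha\,\Gamma(2-\alpha)<1$ forces $\lambda^\star>\lambda$, so a Banach fixed-point argument in $L_\infty(\Omega)$ combined with standard elliptic regularity produces a unique $U^m$. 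Writing $e^m:=u(\cdot,t_m)-U^m$ and $r^m:=f(\cdot,t_m,U^m)-f(\cdot,t_m,u(\cdot,t_m))$, so $|r^m|\le\lambda|e^m|$ by A1, the error equation is
\[
\delta_t^\alpha e^m+\LL e^m=\zeta^m+r^m,\qquad \zeta^m:=(\delta_t^\alpha-\pt_t^\alpha)u(\cdot,t_m),
\]
and the regularity $\|\pt_t^l u(\cdot,t)\|_{L_2(\Omega)}\lesssim 1+t^{\alpha-l}$, $l=1,3$, is tailor-made for the truncation-error machinery of \cite{NK_L2}, yielding $\|\zeta^m\|_{L_2(\Omega)}\lesssim \mathcal{E}^m$ with the three-case structure of \eqref{L2_semi_error}.

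To convert this into the pointwise-in-time bound on $\varepsilon^m:=\|e^m\|_{L_2(\Omega)}$, the natural route is to test the error equation against $e^m$ in $L_2(\Omega)$, discard the non-negative $\LL$-contribution, and apply Cauchy--Schwarz together with the Lipschitz bound on $r^m$ and an Alikhanov-type estimate $\langle e^m,\delta_t^\alpha e^m\rangle\ge \tfrac12\delta_t^\alpha(\varepsilon^m)^2$, obtaining the scalar discrete fractional inequality $(\delta_t^\alpha-2\lambda)(\varepsilon^m)^2\lesssim \varepsilon^m\,\mathcal{E}^m$. Then, case-by-case on $r$ versus $3-\alpha$, one constructs a non-negative scalar barrier $\mathcal{B}^m\lesssim \mathcal{E}^m$ satisfying $(\delta_t^\alpha-2\lambda)(\mathcal{B}^m)^2\gtrsim \mathcal{B}^m\,\mathcal{E}^m$, taken as a small perturbation of the linear-case barriers of \cite{NK_L2}; applying Lemma~\ref{lem_comp} (with $\lambda$ replaced by $2\lambda$, harmlessly renormalised via the mesh condition) to $(\mathcal{B}^m)^2-(\varepsilon^m)^2$ then gives $\varepsilon^m\le\mathcal{B}^m\lesssim\mathcal{E}^m$, i.e.\ \eqref{L2_semi_error}.

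The main obstacle is the Alikhanov-type estimate for this L2 operator, which is non-trivial precisely because $\delta_t^\alpha$ is not associated with a Z-matrix. I would approach it through the factorisation \eqref{UV_ab}: writing $\delta_t^\alpha e^m=\sum_{j=0}^m \kappa_{m,j}V^j[e]$ with the sign pattern \eqref{UV_kappa}, an Abel summation combined with the convexity of $\|\cdot\|_{L_2(\Omega)}^2$ and the recursion $V^j[e]=(e^j-\beta_j e^{j-1})/(1-\beta_j)$ should reduce the required inequality to the known convexity estimates for M-matrix fractional operators, in line with the way the inverse-monotonicity is established for $\delta_t^\alpha$ itself in \cite{NK_L2}. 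A secondary technical point is verifying that the linear-case barriers from \cite{NK_L2} absorb the $-2\lambda\mathcal{B}^m$ shift without losing sharpness; this is indeed the case because, under $\lambda\tau_j^\alpha\,\Gamma(2-\alpha)<1$, the term $2\lambda\mathcal{B}^m$ is lower order than the diagonal contribution of $\delta_t^\alpha\mathcal{B}^m$ on every subinterval, so that only the barrier constants need to be adjusted.
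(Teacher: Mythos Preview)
Your overall architecture (existence by fixed point, error equation, truncation bound, then a scalar comparison argument powered by Lemma~\ref{lem_comp}) is sound in spirit and matches the paper's philosophy, but the central step you identify as ``the main obstacle'' is in fact a genuine gap. The Alikhanov-type inequality $\langle e^m,\delta_t^\alpha e^m\rangle\ge\frac12\,\delta_t^\alpha\|e^m\|_{L_2(\Omega)}^2$ is \emph{not} available for this L2 operator; its standard proof uses $a_{mj}\le 0$ for $j<m$, which fails here, and the factorisation \eqref{UV_ab} does not supply a substitute: writing $\delta_t^\alpha e^m=\sum_j\kappa_{m,j}V^j$ mixes $e^m$ with $V^j$ in the inner product, and there is no convexity identity that turns $\langle e^m,V^j\rangle$ into a one-sided bound on $\delta_t^\alpha\|e^j\|^2$. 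Your Abel-summation sketch does not address this mismatch, so the scalar inequality $(\delta_t^\alpha-2\lambda)(\varepsilon^m)^2\lesssim\varepsilon^m\,\mathcal E^m$ is not justified. (There is also a smaller slip: the truncation error is bounded by $(\tau_1/t_m)^{\gamma+1}$, not by $\mathcal E^m$; the three-case $\mathcal E^m$ appears only \emph{after} the stability step.)

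The paper avoids the Alikhanov inequality altogether by testing the error equation not with $e^m$ but with $V^m=(e^m-\beta_m e^{m-1})/(1-\beta_m)$, and by introducing the mixed energy $W^m:=\bigl(\|V^m\|_{L_2(\Omega)}^2+\kappa_m^*\langle\LL e^m,e^m\rangle\bigr)^{1/2}$. The sign pattern \eqref{UV_kappa} then gives $\langle\delta_t^\alpha e^m+\LL e^m,V^m\rangle\ge W^m\sum_{j}\kappa_{m,j}W^j$, so one obtains the \emph{linear} scalar relation $\sum_j\kappa_{m,j}W^j-\lambda\|e^m\|\lesssim(\tau_1/t_m)^{\gamma+1}$ together with $\|e^m\|-\beta_m\|e^{m-1}\|\le(1-\beta_m)W^m$. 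This is precisely the two-sequence hypothesis of Corollary~\ref{cor_main_stab_XM_star}, which then delivers $\|e^m\|\lesssim\mathcal U^m(\tau_1;\gamma)\simeq\mathcal E^m$ directly, with no quadratic barrier needed. In short, the paper replaces your hoped-for product inequality for $\delta_t^\alpha$ by a carefully chosen test function and an auxiliary energy that are compatible with the M-matrix structure of $\{\kappa_{m,j}\}$; that choice, together with the delicate estimate $\frac{\beta_m}{1-\beta_m}\le|\kappa_{m,m-1}|\sqrt{\kappa_m^*\kappa_{m-1}^*}$ controlling the cross term $\langle\LL e^m,e^{m-1}\rangle$, is the missing ingredient in your plan.
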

While the above theorem is formulated for a practical case of the standard graded mesh, it applies to more general temporal meshes that satisfy the following assumptions A2 and A3
(which is reflected in a more general statement of Theorem~\ref{theo_star} in \S\ref{sec_proof}).%

\begin{itemize}
\item[{\bf A2\,\,}]
Assume that
$1\le\rho_{j+1}\le\rho_{j}\le \bar\rho^*$
 $\forall\, j\ge K+1$, for some integer $1\le K\lesssim 1$ (i.e. $K$ is sufficiently large, but independent of $M$),
where
$\bar\rho^*:=2/(1-\bar\sigma^*)-1$, with a  constant
$\bar\sigma^*=\bar\sigma^*(\alpha)$ in $(0,1)$
 from \cite[Lemma~5.1]{NK_L2}
(for the existence and computation of which see also \cite[Corollary~2.10 and Remark~2.12]{NK_L2}).%
\vspace{-0.1cm}
%

\item[{\bf A2${}^{*\,}\!$}]
Assumption A2 is satisfied with $K=1$.\vspace{-0.1cm}

\item[{\bf A3\,\,}] Given $\gamma\in\R$, let the temporal mesh satisfy\vspace{-0.1cm}
$$
\tau _1\simeq M^{-r},\qquad 
\tau_j
\simeq t_j/j,
\qquad 
t_j \sim \tau_1 j^r
\qquad
\forall\,j=1,\ldots,M,\vspace{-0.1cm}
$$
for some $1\le r\le (3-\alpha)/\alpha$
if $\gamma>\alpha-1$ or for some $r\ge1$ if $\gamma\le \alpha-1$.
\end{itemize}

\begin{remark}\label{rem_graded_A23}
For any fixed $\alpha\in(0,1)$ and $r\ge 1$, there exists $K=K(\alpha,r)\lesssim 1$ such that the graded temporal mesh $\{t_j=T(j/M)^r\}_{j=0}^M$ satisfies A2 and A3; see \cite[Corollary~3.3]{NK_L2}.
\end{remark}

\begin{remark}[regularity of $u_0$] \color{blue}It is assumed in the above Theorem~\ref{theo_semi} that
the initial data $u_0\in L_\infty(\Omega)$. By comparison, \cite{Jin_Li_Zhou} addresses
a version of \eqref{problem} under the assumption $u_0\in H_0^1(\Omega)\cap H^2(\Omega)$ (with solution regularity bounds given by \cite[Theorem~3.1]{Jin_Li_Zhou}),
which, for smooth and convex polyhedral domains, implies
$u_0\in L_\infty(\Omega)$. Note also that when assuming $\|\partial_t^l u (\cdot, t)\|_{L_2(\Omega)}\lesssim 1+t^{\alpha-l}$, we indirectly impose $u_0\in H_0^1(\Omega)\cap H^2(\Omega)$
even in the linear case.
\end{remark}


\section{Discrete comparison principle and stability properties for $\delta_t^\alpha-\lambda$}\label{sec_stab}

\begin{remark}[matrix associated with $\delta_t^\alpha$]\label{rem_A}
One can easily see that the operator $\delta_t^\alpha$ of \eqref{delta_t_def} allows a representation $\delta_t^\alpha U^m=\sum_{j\le m}a_{mj}U^j$ $\forall\,m\ge 1$. Hence, it is
associated with a lower triangular matrix $A$ with elements $\{a_{mj}\}$.
While the signs of its off-diagonal elements vary,
the diagonal elements $a_{mm}=\D_t^\alpha\Pi_{2,m-1}\phi^m(t)$ for $m>1$
and $a_{mm}=\D_t^\alpha\Pi_{1,1}\phi^1(t_1)$ for $m=1$, where $\phi^m$ is the standard basis function equal to $1$ at $t_m$ and vanishing at $t_j\neq t_m$.
Hence, a calculation (using integration by parts combined with
$[\Pi_{2,m-1}-\Pi_{1,m}]\phi^m(t)\le 0$ for $t\le t_m$) yields $\D_t^\alpha[\Pi_{2,m-1}-\Pi_{1,m}]\phi^m(t_m)\ge 0$. Hence,  $a_{mm}\ge \D_t^\alpha\Pi_{1,m}\phi^m(t_m)$,
or, equivalently, $a_{mm}\ge  \tau_m^{-\alpha}\{\Gamma(2-\alpha)\}^{-1}$
  $\forall\, m\ge 1$.
  \end{remark}

  \begin{lemma}[Comparison principle for $\delta_t^\alpha-\lambda$]\label{lem_comp}
Suppose that
the temporal grid $\{t_j\}_{j=0}^M$ satisfies A2${}^*$ and
$\lambda\tau_j^{\alpha}< \{\Gamma(2-\alpha)\}^{-1}$ $\forall\,j\ge1$.
Then $U^0\le B^0$ and  $(\delta_t^\alpha-\lambda)U^m\le (\delta_t^\alpha-\lambda)B^m$ $\forall\,m\ge1$
imply that $U^m\le B^m$ $\forall\,m\ge0$.
\end{lemma}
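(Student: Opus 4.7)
The plan is to set $W^m := B^m - U^m$, so the hypotheses become $W^0 \ge 0$ and $(\delta_t^\alpha - \lambda) W^m \ge 0$ for $m \ge 1$, and the goal is $W^m \ge 0$. First I would reduce to the case $W^0 = 0$ by passing to $\tilde W^m := W^m - W^0$: since every interpolant appearing in \eqref{delta_t_def} reproduces constants and $\pt_t^\alpha$ annihilates them, $\delta_t^\alpha \tilde W^m = \delta_t^\alpha W^m$, hence $(\delta_t^\alpha - \lambda)\tilde W^m = (\delta_t^\alpha - \lambda) W^m + \lambda W^0 \ge 0$, and proving $\tilde W^m \ge 0$ then yields $W^m \ge W^0 \ge 0$.

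Next, on the space of sequences vanishing at $m = 0$, the operator $\delta_t^\alpha$ is represented by an $M\times M$ lower triangular matrix $A=(a_{mj})$ whose diagonal entries satisfy $a_{mm}\ge \tau_m^{-\alpha}/\Gamma(2-\alpha)$ by Remark~\ref{rem_A}. Under A2${}^*$, the inverse monotonicity of $\delta_t^\alpha$ (equivalently, \cite[Theorem~3.1]{NK_L2}, established via the decomposition \eqref{UV_ab}) states precisely that $A^{-1}\ge 0$ elementwise. Setting $F^m := (\delta_t^\alpha-\lambda)\tilde W^m \ge 0$, the claim becomes $\tilde W = (A-\lambda I)^{-1}F \ge 0$ elementwise.

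The key manipulation is the factorization $(A-\lambda I)^{-1} = (I-\lambda A^{-1})^{-1}\,A^{-1}$. The matrix $\lambda A^{-1}$ is lower triangular and elementwise nonnegative, with diagonal entries $\lambda/a_{mm}$. The hypothesis $\lambda\tau_m^\alpha < \{\Gamma(2-\alpha)\}^{-1}$ together with the lower bound from Remark~\ref{rem_A} gives $\lambda/a_{mm} < 1$ for every $m$, so the spectral radius of $\lambda A^{-1}$ is strictly less than $1$ and the Neumann series $(I-\lambda A^{-1})^{-1} = \sum_{k\ge 0}(\lambda A^{-1})^k$ converges. Each term is a product of elementwise nonnegative matrices, hence is nonnegative, so the sum is nonnegative. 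Therefore $(A-\lambda I)^{-1}$ is a product of two elementwise nonnegative matrices and is itself nonnegative, yielding $\tilde W \ge 0$ as required.

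The main obstacle is conceptual rather than computational: the usual M-matrix / diagonal-dominance argument that handles $\delta_t^\alpha+\lambda$ for Z-matrix schemes (L1, Alikhanov) fails here, since the off-diagonal entries of $A$ change sign and the resolvent $A-\lambda I$ is not a Z-matrix. The route above sidesteps this entirely by leveraging the already-established inverse monotonicity of $A$ as a black box and pairing it with the diagonal lower bound of Remark~\ref{rem_A}; this is precisely why the argument transfers, with essentially no modification, to any inverse-monotone discretization of $\pt_t^\alpha$.
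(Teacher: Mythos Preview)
Your argument is correct and follows the same route as the paper: reduce by linearity, invoke the inverse-monotonicity of $A$ established under A2${}^*$ via \eqref{UV_ab}, and then use $\lambda a_{mm}^{-1}<1$ (from Remark~\ref{rem_A} and the hypothesis on $\lambda\tau_m^\alpha$) to conclude. The only cosmetic difference is in the final step: the paper rewrites $A\vec U\le\lambda\vec U$ as $(1-\lambda a_{mm}^*)U^m\le\lambda\sum_{j<m}a_{mj}^*U^j$ and finishes by a one-line induction on $m$, whereas you package the same computation as the Neumann series $(I-\lambda A^{-1})^{-1}=\sum_{k\ge0}(\lambda A^{-1})^k\ge0$; since $A^{-1}$ is lower triangular, the induction is precisely forward substitution in $I-\lambda A^{-1}$, so the two are the same argument.
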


\begin{proof}
As the operator $\delta_t^\alpha-\lambda$ is linear, it suffices to consider the case $B^j=0$
$\forall\,j$, i.e. to show that $U^0\le 0$ and  $(\delta_t^\alpha-\lambda)U^m\le 0$ $\forall\,m\ge1$
imply $U^m\le 0$ $\forall\,m\ge0$.
Thus, we have $A\vec{U}-\lambda \vec{U}\le 0$,
where $A$ is an $(M+1)\times (M+1)$ matrix associated with $\delta_t^\alpha$, while the notation of type
 $\vec{U}:=\{U^j\}_{j=0}^M$ is used for the corresponding column vector.
Note that, by \cite[Corollary~2.10]{NK_L2} (in which a milder upper bound on $\sigma_j=1-2/(1+\rho_j)$,
and so on $\rho_j$, is imposed
compared to
\cite[Lemma~5.1]{NK_L2}), assumption A2${}^*$ is sufficient
for the inverse-monotone representation \eqref{UV_ab}.
Hence, the matrix $A$  is inverse-monotone, i.e. the elements of its inverse, denoted by $a_{mj}^*$, are non-negative
(see also \cite[Remark~2.1]{NK_L2}.
Now $A\vec{U}\le \lambda \vec{U}$ implies $\vec{U}\le \lambda A^{-1}\vec{U}$, or, equivalently,
$(1-\lambda a^*_{mm})U^m\le \lambda\sum_{j<m}a^*_{mj} U^j$,
as $A^{-1}$ is lower triangular, as well as $A$.
Note that here $1-\lambda a^*_{mm}>0 $, in view of $a^*_{mm}=1/a_{mm}$
combined with $\lambda\tau_m^{\alpha}< \{\Gamma(2-\alpha)\}^{-1}$ and Remark~\ref{rem_A}.
The desired assertion follows by induction.
\end{proof}

\begin{theorem}[Stability of $\delta_t^\alpha-\lambda$]\label{theo_main_stab_semi}
Given $\gamma\in\R$,
let  the temporal mesh 
satisfy A2${}^*$, A3,  and
$\lambda\tau_j^{\alpha}< \{\Gamma(2-\alpha)\}^{-1}$ $\forall\,j\ge1$.
Then 
for $\{U^j\}_{j=0}^M$
 one has
\beq\label{main_stab_new}
\left.\!\!\!\!\begin{array}{c}
(\delta_t^\alpha-\lambda) U^j\lesssim (\tau_1/ t_j)^{\gamma+1}
\\[0.2cm]
\forall j\ge1,\;\;\; U^0=0
\end{array}\hspace{-0.15cm}\right\}
\; \Rightarrow\;
U^j\lesssim
\U^j(\tau_1;\gamma)
:=
\ell_\gamma\tau_1 t_j^{\alpha-1}(\tau_1/t_j)^{\min\{0,\,\gamma\}}\;\;
\forall j\ge 1,
\eeq
where $\ell_\gamma:=1+\ln(T/\tau_1)$ for $\gamma=0$ and $\ell_\gamma:=1$ otherwise.
\end{theorem}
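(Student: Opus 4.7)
The plan is to apply the discrete comparison principle of Lemma~\ref{lem_comp} with a barrier function proportional to $\U^j(\tau_1;\gamma)$, combined with the analogous $\lambda=0$ stability of $\delta_t^\alpha$ already established in \cite{NK_L2}.

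First, I would invoke from \cite{NK_L2} the corresponding $\lambda=0$ stability, whose proof hinges on $\U^j$ being a discrete barrier for $\delta_t^\alpha$ in the sense that $\delta_t^\alpha \U^j \gtrsim (\tau_1/t_j)^{\gamma+1}$ with $\U^0=0$. This identifies $\U^j$ as the natural candidate supersolution in the $\lambda>0$ setting as well.

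Second, to accommodate the extra $-\lambda U^j$ contribution, I would employ a discrete fractional Gronwall argument. Rewriting the hypothesis as $\delta_t^\alpha U^j \le \lambda U^j + C_0(\tau_1/t_j)^{\gamma+1}$ and reducing to the equality case (via Lemma~\ref{lem_comp}), I would iterate the $\lambda=0$ bound: set $U^{(0)}\equiv 0$ and define $U^{(k+1)}$ by $\delta_t^\alpha U^{(k+1)}_j = \lambda U^{(k)}_j + C_0(\tau_1/t_j)^{\gamma+1}$ with $U^{(k+1)}_0=0$. Lemma~\ref{lem_comp} yields the monotone convergence $U^{(k)}_j \nearrow U^j$, and applying the $\lambda=0$ stability at each level produces a summable recursion, the sum of which gives $U^j \lesssim \U^j(\tau_1;\gamma)$, with an implicit constant depending on $\lambda$, $T$, $\alpha$, and $\gamma$.

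The main obstacle is the absence of a direct single-step barrier: a bare $B^j = K\U^j$ fails to satisfy $(\delta_t^\alpha-\lambda)B^j \ge (\tau_1/t_j)^{\gamma+1}$ pointwise in $j$ with a $j$-uniform constant $K$, since $\lambda \U^j$ can exceed $(\tau_1/t_j)^{\gamma+1}$ as $t_j$ grows (notably for $\gamma\ge 0$, where the ratio scales like $t_j^\alpha$ up to logarithmic or polynomial factors in $t_j/\tau_1$). Converting this local deficiency into a global bound via the Gronwall/iteration mechanism is the technical heart of the proof; the smallness hypothesis $\lambda\tau_j^\alpha<\{\Gamma(2-\alpha)\}^{-1}$ ensures that each iteration is well-posed through the positive-diagonal structure exploited in Lemma~\ref{lem_comp}, while careful tracking of how $\delta_t^\alpha$ propagates $\U^j$-type inputs across iterates is what yields the final $M$-independent constant.
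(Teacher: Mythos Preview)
Your plan diverges from the paper at the second step. The paper does not iterate the $\lambda=0$ bound; instead it constructs, following \cite[Theorem~3.1]{NK_semil_L1}, a single explicit barrier ${\mathcal W}^j$---a finite linear combination of auxiliary functions ${\mathcal B}_{\gamma_k}^j$ defined by $\delta_t^\alpha {\mathcal B}_{\gamma_k}^j = \ell_{\gamma_k}^{-1}(\tau_1/t_j)^{1+\gamma_k}$ for a short decreasing sequence $\gamma_0=\gamma,\gamma_1,\ldots$ reaching a negative value---and then applies Lemma~\ref{lem_comp} \emph{once} to conclude $U^j\le C\,{\mathcal W}^j\lesssim\U^j$. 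The cascade terminates because once the exponent is negative the corresponding barrier carries an extra positive power of $t_j$ that dominates $\lambda{\mathcal W}^j$ and closes the inequality $(\delta_t^\alpha-\lambda){\mathcal W}^j\gtrsim(\tau_1/t_j)^{\gamma+1}$.

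Your Gronwall iteration has a genuine gap: the ``summable recursion'' is asserted but fails under the only tool you invoke. Take $\gamma>0$, so $\U^j=\tau_1 t_j^{\alpha-1}$. After the first step $U^{(1)}_j\lesssim \tau_1 t_j^{\alpha-1}$, the next right-hand side $\lambda U^{(1)}_j\lesssim \lambda T^\alpha(\tau_1/t_j)$ forces $\gamma'=0$ in the $\lambda=0$ stability and hence picks up the factor $\ell_0=1+\ln(T/\tau_1)\sim r\ln M$. Each subsequent iterate acquires another such factor, so the $k$-th increment is bounded only by $(C\lambda T^\alpha\ell_0)^k\,\U^j$, and the series diverges once $M$ is large. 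Trying instead $\gamma'=-k\alpha$ at step $k$ avoids the logarithm but leaves the $\gamma'$-dependent hidden constants in \cite[Theorem~3.2]{NK_L2} untracked; one would need Gamma-function-type decay in $k$, which that theorem does not supply. The smallness condition $\lambda\tau_j^\alpha<\{\Gamma(2-\alpha)\}^{-1}$ only guarantees well-posedness of each iterate and does nothing for summability. In short, the iteration cannot be closed with the $\lambda=0$ result used as a black box; the paper's finite barrier construction is precisely the device that replaces it.
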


\begin{corollary}\label{cor_main_stab_XM_star}
Under the conditions of Theorem~\ref{theo_main_stab_semi} on the temporal mesh,
the operator $\delta_t^\alpha$ allows a representation of type \eqref{UV_ab}.
Furthermore,
for $\{U^j\}_{j=0}^M$ and $\{W^j\}_{j=0}^M$ with
 $U^0=W^0=0$ the following is true:
\beq\label{star_eq}
\left.
\begin{array}{cc}
\displaystyle\sum_{j=0}^m \kappa_{m,j}W^j-\lambda U^m\lesssim(\tau_1/ t_m)^{\gamma+1}&\forall\,m\ge 1\\
\displaystyle
\frac{U^j-\beta_j U^{j-1}}{1-\beta_j}\le W^j
&\forall\,j\ge 1
\end{array}\right\}
\quad\Rightarrow\quad
U^j\lesssim
\U^j(\tau_1;\gamma),
\eeq
where
the coefficients $\{\kappa_{m,j}\}$ and $\{\beta_j\}$ are from \eqref{UV_ab}, and
$\U^j$ is defined in \eqref{main_stab_new}.
\end{corollary}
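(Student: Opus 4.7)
The plan is to reduce the corollary to Theorem~\ref{theo_main_stab_semi} via a comparison argument against an auxiliary sequence built directly from $\{W^j\}$. Under assumption A2${}^*$, the inverse-monotone representation \eqref{UV_ab} of $\delta_t^\alpha$ is provided by \cite[Corollary~2.10]{NK_L2} (as already invoked in the proof of Lemma~\ref{lem_comp}), so the first claim is immediate and I can focus on the second assertion.

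First I would introduce the auxiliary sequence $\{\tilde U^j\}_{j=0}^M$ defined by $\tilde U^0:=0$ and the forward recursion $\tilde U^j:=(1-\beta_j)W^j+\beta_j\tilde U^{j-1}$ for $j\ge 1$. By design, $(\tilde U^j-\beta_j\tilde U^{j-1})/(1-\beta_j)=W^j$, so the representation \eqref{UV} gives $\delta_t^\alpha\tilde U^m=\sum_{j=0}^m\kappa_{m,j}W^j$. This is the key move: encoding the data $W^j$ as the $V$-transform of $\tilde U^j$ turns the sum in the first hypothesis into a genuine discrete fractional derivative applied to a known object.

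Next, I would verify by induction on $j$ that $U^j\le\tilde U^j$. The base case follows from $U^0=0=\tilde U^0$; for the induction step, solving the second hypothesis for $U^j$ gives $U^j=(1-\beta_j)V^j+\beta_jU^{j-1}\le(1-\beta_j)W^j+\beta_j\tilde U^{j-1}=\tilde U^j$, where I use $1-\beta_j>0$, $\beta_j\ge 0$, and the inductive assumption. Combining this monotonicity with $\lambda\ge 0$ and the first hypothesis then yields
\[
(\delta_t^\alpha-\lambda)\tilde U^m=\sum_{j=0}^m\kappa_{m,j}W^j-\lambda\tilde U^m\le\sum_{j=0}^m\kappa_{m,j}W^j-\lambda U^m\lesssim(\tau_1/t_m)^{\gamma+1}.
\]
Together with $\tilde U^0=0$, this places $\{\tilde U^j\}$ into the hypothesis of Theorem~\ref{theo_main_stab_semi}, producing $\tilde U^m\lesssim\U^m(\tau_1;\gamma)$, and the bound $U^m\le\tilde U^m$ concludes the argument.

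The only delicate point is the design of $\tilde U$: it must be built purely from the data $\{W^j\}$ so that $\delta_t^\alpha\tilde U^m$ coincides with the expression appearing in the hypothesis, rather than one involving the unknown $U^j$. Once that observation is in place, the remainder is routine sign-tracking that exploits $\beta_j\in[0,1)$ and $\lambda\ge 0$, so no further stability estimates beyond Theorem~\ref{theo_main_stab_semi} are needed.
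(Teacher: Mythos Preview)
Your proof is correct and follows essentially the same route as the paper: your auxiliary sequence $\tilde U^j$ defined by $\tilde U^j=(1-\beta_j)W^j+\beta_j\tilde U^{j-1}$ is precisely the paper's $U^{*,j}$ defined via $A_2\vec U^{*}=\vec W$, and your induction $U^j\le\tilde U^j$ is the componentwise version of the paper's observation that $A_2$ is inverse-monotone. The only difference is presentation---you write the argument recursively while the paper uses matrix language---so the two proofs coincide in substance.
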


\noindent{\it Proof of Theorem~\ref{theo_main_stab_semi}.}
Note that for $\lambda=0$ the desired assertion immediately follows from
\cite[Theorem~3.2]{NK_L2},
with the assumptions of the latter satisfied in view of A2${}^*$ and A3.
To extend \eqref{main_stab_new} from the case $\lambda=0$ to $\lambda>0$ and sufficiently large $M$ (otherwise, for $M\lesssim 1$, the desired assertion is straightforward),
in view of Lemma~\ref{lem_comp},
it suffices to construct an appropriate barrier function.
This function is constructed exactly as in the proof of
\cite[Theorem~3.1]{NK_semil_L1} with minimal changes (the discrete comparison principle of Lemma~\ref{lem_comp} playing a key role in this proof).

Importantly, the special case of $\gamma=0$ and $\lambda>0$
(which involves $\ell_\gamma:=1+\ln(T/\tau_1)$)
was not addressed in \cite[Theorem~3.1]{NK_semil_L1}.
To include this case, one needs the following minor modifications
in the proof of
\cite[Theorem~3.1(i)]{NK_semil_L1}. With the notation of the latter proof,
the definition of  the auxiliary barrier ${\mathcal B}^j_\gamma$ for any $\gamma\in\R$
should be modified to
$\delta_t^\alpha {\mathcal B}_\gamma^j= \ell_\gamma^{-1}(\tau/ t_j)^{1+\gamma}$
$\forall j\ge1$ subject to  ${\mathcal B}_\gamma^0=0$. Then the construction of the barrier ${\mathcal W}^j$
remains unchanged,
which leads to $U^j\lesssim \ell_\gamma{\mathcal W}^j$, equivalent to our \eqref{main_stab_new}.%
\hfill$\square$

\begin{remark}[{sharper $\ell_\gamma$}]\label{rem_ell_sharp}
For $\gamma=0$,
a sharper version of \eqref{main_stab_new} holds true with
 $\ell_\gamma=\ell_0:=1+\ln(t_j/\tau_1)$ factor in $\U^j$.
Indeed, an inspection of the above proof shows that \eqref{main_stab_new} is valid with unchanged constants for any $T\ge t_1$ (as long as $T\lesssim 1$). Hence, replacing $T$ by $t_m$, one gets
$U^j\lesssim [1+\ln(t_m/\tau_1)]\tau_1 t_j^{\alpha-1}$ $\forall\,j\le m$,
so
$U^m\lesssim [1+\ln(t_m/\tau_1)]\tau_1 t_m^{\alpha-1}$ $\forall\,m$.
\end{remark}

\begin{remark}[{generalization for the L1 method
}]
A version of \eqref{main_stab_new}
for the L1 discrete operator in place of $\delta_t^\alpha$
is given by
\cite[Theorem~3.1]{NK_semil_L1}, however, with
the case of $\gamma=0$ and $\lambda>0$ excluded.
Since the relevant part in the above proof also applies to the L1 operator,
we  now conclude that \cite[Theorem~3.1]{NK_semil_L1} is valid with
\cite[(1.5)]{NK_semil_L1} replaced by its generalized version~\eqref{main_stab_new}
(which is different in that it now includes the factor $\ell_\gamma$ and, hence, applies 
whether $\lambda=0$ or $\lambda>0$).
Hence, we immediately obtain a sharper version of the error bound \cite[(4.2)]{NK_semil_L1}, with 
for $r=2-\alpha$
from ${\mathcal E}^m= M^{(\alpha-2)(1-\epsilon)}\,t_m^{\alpha-(1-\epsilon)}$
(with an arbitrarily small positive $\epsilon$)
to  ${\mathcal E}^m= \ell_0 M^{\alpha-2}\,t_m^{\alpha-1}$.
A similar improvement also applies to all other error bounds in \cite{NK_semil_L1} which involve ${\mathcal E}^m$.
\end{remark}

\noindent{\it Proof of Corollary~\ref{cor_main_stab_XM_star}.} First
First, recall
from the proof of Lemma~\ref{lem_comp}
that A2${}^*$ is, indeed, sufficient for the inverse-monotone representation \eqref{UV_ab}.
Now, note that the assumptions in \eqref{theo_main_stab_semi}, rewritten in the matrix form, become
$A_1 \vec{W}- \lambda \vec{U}\lesssim \vec{F}$
and $A_2\vec{U}\le \vec W$,
with   lower triangular
 $M\times M$ matrices $A_1$ and $A_2$, and the notation of type
 $\vec{U}:=\{U^j\}_{j=1}^M$  used for the corresponding column vectors.
 For $\vec{F}$, we use
$F^m:=(\tau_1/ t_m)^{\gamma+1}$  for $m\ge1$.
Note that being M-matrices (i.e. diagonally dominant, with non-positive off-diagonal elements,
which follows from \eqref{UV_ab}),
both $A_1$ and $A_2$ are inverse-monotone.
To complete the proof, define the auxiliary $\vec{U}^*$ by
$A_2 \vec{U}^*=\vec{W}$, which immediately implies that $\vec{U}\le\vec{U}^*$ elementwise
(as $A_2$ is inverse-monotone).
Hence, one gets
$$
(A_1A_2-\lambda)\vec{U}^*=A_1 \vec{W}-\lambda\vec{U}^*\le A_1 \vec{W}-\lambda U \lesssim  \vec{F},
$$
or, equivalently, $(\delta_t^\alpha -\lambda)U^{*,m}\lesssim F^m$ subject to $U^{*,0}=0$.
So, an application of Theorem~\ref{theo_main_stab_semi} yields the
desired bound $U^j\le U^{*,j}\lesssim \U^j$.
\hfill$\square$

\section{Error analysis. Proof of Theorem~\ref{theo_semi}}\label{sec_proof}

In view of Remark~\ref{rem_graded_A23}, it suffices to prove Theorem~\ref{theo_semi}
under more general assumptions A2 and A3; see Theorem~\ref{theo_star} below.

\begin{lemma}[Stability for parabolic case]\label{lem_semi_stab}
Given $\gamma\in\R$,
let 
the temporal mesh 
satisfy A2, A3,   and
$\lambda\tau_j^{\alpha}< \{\Gamma(2-\alpha)\}^{-1}$ $\forall\,j\ge1$.
Then for $\{U^j\}_{j=0}^M$ from \eqref{semediscr_method} one has
\beq\label{semi_stab}
 %
 \left.\begin{array}{c}
\|f(\cdot,t_j, U^j)\|_{L_2(\Omega)}-\lambda \| U^j\|_{L_2(\Omega)}
 \lesssim (\tau_1/ t_j)^{\gamma+1}
\\[0.2cm]
\forall j\ge1,\;\;\; U^0=0\;\;\mbox{in}\;\bar\Omega
\end{array}\right\}
 \;\;\Rightarrow\;\;
 \|U^j\|_{L_2(\Omega)}\lesssim
\U^j(\tau_1;\gamma)\;\;\forall\, j\ge 1,
\eeq
where $\U^j$ is defined in \eqref{main_stab_new}.
\end{lemma}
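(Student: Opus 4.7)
The plan is to reduce the vector-valued stability estimate to a scalar inequality for the sequence $u^j := \|U^j\|_{L_2(\Omega)}$, and then invoke the scalar stability of Theorem~\ref{theo_main_stab_semi}, whose hypothesis is already of the form appearing in \eqref{semi_stab}.

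First, I would pair the scheme \eqref{semediscr_method} with $U^m$ in $L_2(\Omega)$. Self-adjointness and positivity of $\LL$ give $(\LL U^m,U^m)_{L_2(\Omega)} \ge 0$, and Cauchy--Schwarz on the reaction term yields
\[
(\delta_t^\alpha U^m,U^m)_{L_2(\Omega)} \le \|f(\cdot,t_m,U^m)\|_{L_2(\Omega)}\,u^m.
\]

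The crucial ingredient is a discrete Bernstein-type inequality
\[
(\delta_t^\alpha U^m,U^m)_{L_2(\Omega)} \ge u^m\,\delta_t^\alpha u^m.
\]
Setting $p^{j,m} := u^j u^m - (U^j,U^m)_{L_2(\Omega)} \ge 0$ (Cauchy--Schwarz), a direct computation using the representation \eqref{UV_ab} and $U^0 = 0$ expresses the difference $(\delta_t^\alpha U^m,U^m)_{L_2(\Omega)} - u^m\,\delta_t^\alpha u^m$ as a sum $\sum_{j=1}^{m-1} c_{m,j}\,p^{j,m}$ with coefficients $c_{m,j} = \kappa_{m,j+1}\beta_{j+1}/(1-\beta_{j+1}) - \kappa_{m,j}/(1-\beta_j)$ determined purely by $\{\kappa_{m,j}\}$ and $\{\beta_j\}$; the $j=0$ and $j=m$ boundary contributions vanish by virtue of $p^{0,m} = p^{m,m} = 0$. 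Nonnegativity of each $c_{m,j}$---which for $j=m-1$ is immediate from $\kappa_{m,m}>0$ together with $\kappa_{m,m-1}\le 0$, and for $j<m-1$ reduces to a comparability inequality between $|\kappa_{m,j}|/(1-\beta_j)$ and $|\kappa_{m,j+1}|\beta_{j+1}/(1-\beta_{j+1})$---is inherited from the inverse-monotone construction of \cite{NK_L2}. Unlike the L1 and Alikhanov cases, where the Bernstein inequality is immediate from the Z-matrix property, the L2-type $\delta_t^\alpha$ is not a Z-matrix, so the inverse-monotone representation \eqref{UV_ab} is essential here.

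Combining the two inequalities and dividing by $u^m > 0$, one obtains $\delta_t^\alpha u^m \le \|f(\cdot,t_m,U^m)\|_{L_2(\Omega)}$; subtracting $\lambda u^m$ and invoking the hypothesis of the lemma yields
\[
(\delta_t^\alpha - \lambda)\,u^m \le \|f(\cdot,t_m,U^m)\|_{L_2(\Omega)} - \lambda u^m \lesssim (\tau_1/t_m)^{\gamma+1}.
\]
Since $u^0 = 0$ and the smallness condition $\lambda\tau_j^\alpha < \{\Gamma(2-\alpha)\}^{-1}$ is in force, Theorem~\ref{theo_main_stab_semi} applied to the scalar sequence $\{u^j\}$ delivers $u^j \lesssim \U^j(\tau_1;\gamma)$, as claimed. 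The main obstacle is the Bernstein-type inequality: verifying $c_{m,j}\ge 0$ for $1\le j\le m-2$ requires the finer sign and comparability structure of $\{\kappa_{m,j}\}$ and $\{\beta_j\}$ established in \cite{NK_L2}, rather than merely the sign pattern used in Lemma~\ref{lem_comp}.
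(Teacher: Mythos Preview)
Your reduction strategy has a fatal gap: the ``Bernstein-type'' inequality $(\delta_t^\alpha U^m,U^m)\ge u^m\,\delta_t^\alpha u^m$ that you rely on is \emph{equivalent} to the Z-matrix property of $\delta_t^\alpha$, which is precisely what fails for the L2 scheme. Indeed, your own computation gives the difference as $\sum_{j=1}^{m-1} c_{m,j}\,p^{j,m}$ with
\[
c_{m,j}=\frac{\kappa_{m,j+1}\beta_{j+1}}{1-\beta_{j+1}}-\frac{\kappa_{m,j}}{1-\beta_j},
\]
but expanding $\delta_t^\alpha U^m=\sum_j\kappa_{m,j}V^j$ in terms of $\{U^j\}$ shows that the coefficient of $U^j$ in $\delta_t^\alpha U^m$ is exactly $a_{m,j}=\frac{\kappa_{m,j}}{1-\beta_j}-\frac{\kappa_{m,j+1}\beta_{j+1}}{1-\beta_{j+1}}=-c_{m,j}$. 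Hence $c_{m,j}\ge 0$ for all $1\le j\le m-1$ is the same as $a_{m,j}\le 0$ for all off-diagonal entries, i.e.\ $A$ is a Z-matrix. The factorization \eqref{UV_ab} makes $A$ inverse-monotone (a product of two M-matrices), but it does \emph{not} make $A$ itself an M-matrix, and \cite{NK_L2} nowhere claims the comparability inequality you invoke. Your approach therefore collapses at the very step you flag as the ``main obstacle''.

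The paper circumvents this by testing \eqref{semediscr_method} against $V^m$ rather than $U^m$, and by \emph{not} discarding the elliptic term: it absorbs $\langle\LL U^m,U^m\rangle$ into an auxiliary energy $W^m=\bigl(\|V^m\|^2+\kappa_m^*\langle\LL U^m,U^m\rangle\bigr)^{1/2}$. The resulting inequality has the form $\sum_{j}\kappa_{m,j}W^j-\lambda\|U^m\|\lesssim(\tau_1/t_m)^{\gamma+1}$ together with $\frac{\|U^m\|-\beta_m\|U^{m-1}\|}{1-\beta_m}\le W^m$, which is exactly the two-sequence hypothesis of Corollary~\ref{cor_main_stab_XM_star}; Theorem~\ref{theo_main_stab_semi} alone is not enough. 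A second issue you overlook is that Theorem~\ref{theo_main_stab_semi} and Corollary~\ref{cor_main_stab_XM_star} require A2${}^*$, whereas the lemma is stated under A2; the paper handles this by a separate reduction step (bounding $\|U^j\|$ for $j\le K$ directly and eliminating these levels), which your proposal omits entirely.
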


\begin{proof}
(i)
We shall start by proving the desired assertion \eqref{semi_stab}
under the simplifying assumptions that, in addition to $U^0=0$, we also have
$U^1=0$, and
a more restrictive condition A2${}^*$ is satisfied instead of A2
(or, equivalently, we have A2 with $K=1$).
Not only A2${}^*$ enables us to employ
the results of section~\ref{sec_stab}, including
Corollary~\ref{cor_main_stab_XM_star}.
It also implies that the assumptions on the temporal grid made in
\cite[Lemma~5.1]{NK_semil_L1} are satisfied, so we can imitate
part~(ii) of the proof of this lemma.

To be more precise,
set
$V^0=0$ and
$V^m=\frac{1}{1-\beta_m}U^m-\frac{\beta_m}{1-\beta_m}U^{m-1}$ for $m\ge 1$,
and also
$$
W^m:=\sqrt{\displaystyle\|V^m\|_{L_2(\Omega)}^2+
\kappa_m^*\,
\langle\LL U^m, U^m\rangle},
\qquad\mbox{where}\quad \kappa^*_m:={\textstyle\frac{\kappa_{m,m}^{-1}}{1-\beta_m}}\,.
$$
Now, on representing $\delta_t^\alpha U^m$ in \eqref{semediscr_method} in terms of $\{V^j\}$
in view of \eqref{UV},
take the $L_2(\Omega)$ inner product
$\langle\cdot,\cdot\rangle$ of \eqref{semediscr_method} with
$V^m$. With the simplified notation $\|\cdot\|:=\|\cdot\|_{L_2(\Omega)}$,
note that $\langle\delta_t^\alpha U^m,V^m\rangle=
\sum_{j=0}^m \kappa_{m,j}\langle V^j,V^m\rangle\ge
\kappa_{m,m}\|V^m\|^2
-\sum_{j=1}^m |\kappa_{m,j}|\,
\|V^j\|\,\|V^m\|$.
Note also that
$\langle\LL U^m,V^m\rangle\ge
\kappa_{m,m}\,\kappa_m^*\langle \LL U^m, U^{m}\rangle
-|\kappa_{m,m-1}|
\sqrt{\kappa^*_m \kappa_{m-1}^*}\,|\langle \LL U^m, U^{m-1}\rangle|
$,
where we used $\frac{\beta_m}{1-\beta_m}\le |\kappa_{m,m-1}|\sqrt{\kappa^*_m \kappa_{m-1}^*}$
(which was established under condition A2${}^*$ in \cite[(5.3)]{NK_L2} for $m\ge 3$, while
$\langle \LL U^m, U^{m-1}\rangle=0$
for $m=1,2$ in view of $U^1=0$;
the evaluation leading to the latter hinges
on a very delicate choice of $\bar\rho^*$ in A2).
Combining the above observations
yields
$\langle\delta_t^\alpha U^m+\LL U^m,V^m\rangle\ge
\kappa_{m,m}(W^m)^2
-|\kappa_{m,m-1}|\,W^{m-1}W^m-\sum_{j=2}^m |\kappa_{m,j}|\,
\|V^j\|\,\|V^m\|\ge \kappa_{m,m}(W^m)^2-\sum_{j=1}^m |\kappa_{m,j}|\,
W^j\,W^{m}=W^m\sum_{j=0}^m \kappa_{m,j}
W^j$.
Thus, the inner product of \eqref{semediscr_method} with
$V^m$ yields\vspace{-0.2cm}
$$
W^m\sum_{j=0}^m \kappa_{m,j}
W^j\le
\|f(\cdot, t_m, U^m)\|\,\|V^m\|\le W^m\,\|f(\cdot, t_m, U^m)\|.\vspace{-0.2cm}
$$
Dividing this by $W^m$ and subtracting $\lambda \|U^m\|$, one gets
$\sum_{j=0}^m \kappa_{m,j}
W^j-\lambda \|U^m\|\lesssim (\tau_1/ t_m)^{\gamma+1}$ (where we also used the theorem hypothesis on $f$).
It remains to note that
$\frac{1}{1-\beta_m}\|U^m\|-\frac{\beta_m}{1-\beta_m}\|U^{m-1}\|\le
\|V^m\|\le W^m$  $\forall\,m\ge1$. Hence, an application of~\eqref{star_eq},
immediately yields
the desired  \eqref{semi_stab}.

(ii)
We now return to the original, more general, assumptions, i.e. $U^1=0$ is no longer assumed, while A2 is assumed with
$1\le K\lesssim 1$ (including the case $K:=M\lesssim 1$).
This more general case is easily reduced to
 the case already addressed in part (i)
 as follows.
First, by imitating part (iii) in the proof  of \cite[Lemma~5.1]{NK_L2} (with minor changes for the semilinear case),
 one gets $\|U^j\|_{L_2(\Omega)}\lesssim \tau_1^{\alpha}\lesssim\U^j$ for $j\le K$
 directly from \eqref{semediscr_method}
 (as $K\lesssim 1$).
 Then, by imitating part (ii) in the proof of \cite[Theorem~3.2]{NK_L2},
 $\{U^j\}_{j\le K}$ are eliminated from \eqref{semediscr_method}, after which one can apply the result of part (i) of this proof.
\end{proof}

\renewcommand{\thetheorema}{\ref{theo_semi}${}^*$}
\begin{theorema}\label{theo_star}
Theorem~\ref{theo_semi} remains valid if
the temporal mesh satisfies more general assumptions A2 and A3 in place of $\{t_j=T(j/M)^r\}_{j=0}^M$.
\end{theorema}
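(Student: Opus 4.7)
\noindent\emph{Proof proposal.}
The plan is to reduce everything to a single application of Lemma~\ref{lem_semi_stab} applied to the nodal error $e^m := u(\cdot,t_m)-U^m$. First I would settle existence and uniqueness of $\{U^m\}_{m=1}^M$ by induction on $m$. Writing $\delta_t^\alpha U^m = a_{mm} U^m + \tilde G^m$ with $\tilde G^m$ depending only on $U^0,\dots,U^{m-1}$, the $m$-th equation of \eqref{semediscr_method} reads
\beq\label{eq_fixed_point}
(a_{mm} I + \LL) U^m + f(\cdot, t_m, U^m) = G^m
\eeq
for a known $G^m \in L_\infty(\Omega)$. By Remark~\ref{rem_A} and the assumption $\lambda\tau_m^{\alpha} < \{\Gamma(2-\alpha)\}^{-1}$, one has $a_{mm}>\lambda$, so the coercive self-adjoint operator $a_{mm} I + \LL$ is invertible on $L_2(\Omega)$ with range in $H^1_0(\Omega)\cap L_\infty(\Omega)$, and A1 makes $v\mapsto (a_{mm}I+\LL)^{-1}(G^m - f(\cdot,t_m,v))$ a contraction of ratio $\lambda/a_{mm}<1$; Banach's theorem yields the unique $U^m$.

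Subtracting \eqref{semediscr_method} from \eqref{problem} at $t=t_m$ gives the error equation
\beq\label{eq_error_proposal}
\delta_t^\alpha e^m + \LL e^m + \bigl[f(\cdot,t_m,u(\cdot,t_m)) - f(\cdot,t_m,U^m)\bigr] = -r^m,\quad e^0=0,
\eeq
with consistency error $r^m := (\delta_t^\alpha - \pt_t^\alpha)u(\cdot,t_m)$. Setting $\tilde f(\cdot, t_m, v) := r^m + f(\cdot,t_m,u(\cdot,t_m)) - f(\cdot,t_m,u(\cdot,t_m)-v)$, equation \eqref{eq_error_proposal} becomes an instance of \eqref{semediscr_method} for $e^m$ with a nonlinearity that is still $\lambda$-Lipschitz in $v$. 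Since $\|\tilde f(\cdot,t_m,e^m)\|_{L_2(\Omega)} - \lambda\|e^m\|_{L_2(\Omega)} \le \|r^m\|_{L_2(\Omega)}$, the task reduces, via Lemma~\ref{lem_semi_stab}, to establishing
\beq\label{eq_cons_target}
\|r^m\|_{L_2(\Omega)} \lesssim (\tau_1/t_m)^{\gamma+1}
\eeq
for a suitably chosen $\gamma=\gamma(r,\alpha)\in\R$.

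Since $r^m$ depends only on the exact $u$, the truncation analysis is purely linear and can be imported from \cite{NK_L2}: under the regularity $\|\pt_t^l u(\cdot,t)\|_{L_2(\Omega)}\lesssim 1+t^{\alpha-l}$, $l=1,3$, and A3, one obtains \eqref{eq_cons_target} with $\gamma>0$ when $1\le r<3-\alpha$, the borderline $\gamma=0$ when $r=3-\alpha$, and $\gamma=(3-\alpha)/r-1\in(-1,0)$ when $r>3-\alpha$. Substituting $\tau_1\simeq M^{-r}$ and $t_m\simeq \tau_1 m^r$ from A3 into the barrier $\U^m(\tau_1;\gamma)$, and in the critical case $r=3-\alpha$ invoking the sharper factor $\ell_0 = 1+\ln(t_m/\tau_1)$ from Remark~\ref{rem_ell_sharp}, reproduces the three branches of ${\mathcal E}^m$ in \eqref{L2_semi_error}. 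The main obstacle is the borderline $r=3-\alpha$ case: the proof in \cite{NK_semil_L1} had to absorb an $M^\epsilon$ loss here, and only the present $\gamma=0$, $\lambda>0$ extension of Lemma~\ref{lem_semi_stab} renders the sharp logarithmic factor attainable.
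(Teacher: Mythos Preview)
Your proposal is correct and follows essentially the same route as the paper's proof: existence/uniqueness via a contraction argument (the paper cites \cite[Lemma~2.1(i)]{NK_semil_L1} for this), rewriting the error equation as an instance of \eqref{semediscr_method} with a $\lambda$-Lipschitz nonlinearity, importing the truncation bound $\|r^m\|_{L_2(\Omega)}\lesssim(\tau_1/t_m)^{\gamma+1}$ with $\gamma=\min\{\alpha,(3-\alpha)/r-1\}$ from \cite{NK_L2}, applying Lemma~\ref{lem_semi_stab}, and then identifying $\U^m\simeq{\mathcal E}^m$ case by case using Remark~\ref{rem_ell_sharp} at $r=3-\alpha$. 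The only slip is a harmless sign: subtracting as you describe gives $+r^m$ on the right of \eqref{eq_error_proposal}, not $-r^m$, which does not affect the norm estimate.
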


\begin{proof}
The existence of a unique solution $U^m\in H^1_0(\Omega)\cap L_\infty(\Omega)$ for $m\ge 1$
follows from $\lambda\tau_j^{\alpha}<\{\Gamma(2-\alpha)\}^{-1}$ $\forall\,j\ge1$
combined with Remark~\ref{rem_A}
exactly as in \cite[Lemma~2.1(i)]{NK_semil_L1}.

Next,
for the error $e^m:= u(\cdot,t_m)-U^m\in H^1_0(\Omega)\cap L_\infty(\Omega)$, using \eqref{problem} and \eqref{semediscr_method},
and the notation
$u^m:=u(\cdot, t_m)$ for the exact solution and
$r^m:=\delta_{t}^{\alpha} u^m-D_t^\alpha u(\cdot, t_m)$ for the truncation error,
one gets
$e^0=0$ and
$\delta_t^\alpha e^m +\LL e^m+F(\cdot, t_m, e^m)=0$ $\forall\,m\ge1$.
Here the nonlinear term
$F(\cdot, t_m, e^m):=f(\cdot, t_m, u^m+e^m)-f(\cdot, t_m, u^m)-r^m$
satisfies
$|F(\cdot, t_m, s)| \le\lambda |s|+|r^m|$ $\forall\,s\in\R$,
so
$\|F(\cdot, t_j, e^j\|_{L_2(\Omega)}-\lambda \| e^j\|_{L_2(\Omega)}
\le \| r^j\|_{L_2(\Omega)}
 \lesssim (\tau_1/ t_j)^{\gamma+1}$,
 where we used a version of the truncation error bound \cite[(4.6)]{NK_L2}
 with $\gamma:=\min\{\alpha,(3-\alpha)/r-1\}$.

Hence, an application of Lemma~\ref{lem_semi_stab} yields
$\|e^j\|_{L_2(\Omega)} \lesssim  \U^j(\tau_1;\gamma)$, and
it remains to show that
$
\U^j=
\ell_\gamma\tau_1 t_j^{\alpha-1} (\tau_1/t_j)^{\min\{0,\,\gamma\}}\simeq {\mathcal E}^j$
from \eqref{L2_semi_error}.
As $\alpha>0$, one concludes that $\min\{0,\,\gamma\}=
\min\{0,\,(3-\alpha)/r-1\}$, while $\ell_\gamma=1$ iff $r\neq 3-\alpha$ (which is equivalent to $\gamma\neq0$).
Now, in view of $\tau_1\simeq M^{-r}$,
if $r>3-\alpha$, one gets
$\U^j=\tau_1 t_j^{\alpha-1}\simeq M^{-r}t_j^{\alpha-1}\simeq {\mathcal E}^j$.
Similarly, if $r<3-\alpha$, one gets
$\U^j= t_j^{\alpha}(\tau_1/t_j)^{(3-\alpha)/r}\simeq M^{\alpha-3}t_j^{\alpha-(3-\alpha)/r}\simeq {\mathcal E}^j$.
Finally, if $r= 3-\alpha$, (similarly to $r>3-\alpha$) one gets
$\U^j=\ell_0\tau_1 t_j^{\alpha-1}\simeq\ell_0 M^{-r}t_j^{\alpha-1}$,
where, in view of Remark~\ref{rem_ell_sharp}, one can use a sharper version of $\ell_0=1+\ln(t_j/\tau_1)$, so again $\U^j\simeq{\mathcal E}^j$.
\end{proof}


\end{document}